\newtheorem{obs} [subsection]{Remark}
\newtheorem{exm} [subsection]{Example}
\newtheorem{conj}[subsection]{Conjecture}
\newtheorem{teor}[subsection]{Theorem}
\newtheorem{lema}[subsection]{Lemma}
\newtheorem{cor} [subsection]{Corollary}
\def\sdepth{\operatorname{sdepth}}
\def\depth{\operatorname{depth}}
\begin{document}
\selectlanguage{english}
\frenchspacing

\large
\begin{center}
\textbf{Stanley depth of complete intersection monomial ideals}

Mircea Cimpoea\c s
\end{center}
\normalsize

\begin{abstract}
We compute the Stanley depth of irreducible monomial ideals and we show that the Stanley depth of a monomial complete intersection ideal is the same as the Stanley depth of it's radical. Also, we give some bounds for the Stanley depth of a monomial complete intersection ideal.

\vspace{5 pt} \noindent \textbf{Keywords:} Stanley depth, monomial ideal, complete intersection.

\vspace{5 pt} \noindent \textbf{2000 Mathematics Subject
Classification:}Primary: 13H10, Secondary: 13P10.
\end{abstract}

\section*{Introduction}

Let $K$ be a field and $S=K[x_1,\ldots,x_n]$ the polynomial ring over $K$.
Let $I\subset S$ be a monomial ideal. A Stanley decomposition of $I$, is a decomposition $\mathcal D: I=\bigoplus_{i=1}^ru_i K[Z_i]$, where $u_i\in S$ are monomials and $Z_i\subset\{x_1,\ldots,x_n\}$. We denote $\sdepth(\mathcal D)=\min_{i=1}^r |Z_i|$ and
$\sdepth(I)=\max\{\sdepth(\mathcal D)|\;\mathcal D$ is a Stanley decomposition of $I \}$. The number $\sdepth(I)$ is called the \emph{Stanley depth} of $I$. Stanley's conjecture says that $\sdepth(I)\geq \depth(I)$. The Stanley conjecture was proved 
for $n\leq 5$ and in other special cases, but it remains open in the general case. See for instance, \cite{apel}, \cite{hsy}, \cite{jah}, \cite{pops} and \cite{popi}.

Let $I\subset S$ be a monomial ideal. We assume that $G(I)=(v_1,\ldots,v_m)$, where $G(I)$ is the set of minimal monomial generators of $I$. We recall some notations from \cite{hvz}. Let $g=(g(1),\ldots,g(n))\in \mathbb N^n$ with $v_i|x^g$
for all $i=1,\ldots,m$, where $x^g:=x_1^{g(1)}\cdots x_n^{g(n)}$. We denote $P_I^{g}=\{\sigma\in\mathbb N^n|\; \sigma\leq g$ and $v_i|x^{\sigma}$ for some $i\}$. If $P_I^g = \bigcup_{i=1}^r [c_i,d_i]$ is a partition of $P_I^g$, we denote 
$\rho(d_i)=\#\{j|\; d_i(j) = g(j)\}$. 

Herzog, Vladoiu, Zheng proved in \cite[Theorem 2.4]{hvz} that there exists a partition $\bigcup_{i=1}^r [c_i,d_i]$ of $P_I^{g}$ such that $\sdepth(I)=\min\{\rho(d_i):\;i\in [r]\}$. Another result from \cite{hvz} says that if $I\subset S$ is a monomial ideal and $IS[x_{n+1}]$ is the extension of $I$ in $S[x_{n+1}]$ then $\sdepth(IS[x_{n+1}])=\sdepth(I)+1$.
In \cite{hvz}, it was conjectured that $\sdepth((x_1,\ldots,x_n))=\left\lceil n/2 \right\rceil$. Herzog, Vladoiu and Zheng reduced this question to a purely combinatorial problem: to show that for any $n$, there exists a partition of $\mathcal P([n])\setminus \{\emptyset\} = \cup_{i=1}^r[X_i,Y_i]$ such that $|Y_i|\geq n/2$ for all $i$. Csaba Biro, David M.Howard, Mitchel T.Keller, William T.Trotter and Stephen J.Young give in \cite[Theorem 1.1]{par} a positive answer to this problem.

In the first section, we compute the Stanley depth of monomial irreducible ideals and show that this is the same as the Stanley depth of monomial prime ideals, see Theorem $1.3$. In the second section we show that the Stanley depth of a monomial complete intersection ideal is the same as the Stanley depth of it's radical, see Theorem $2.1$. Also, we give some bounds for this invariant in Theorem $2.4$. 

\noindent
\textbf{Aknowledgements}. The author would like to express his gratitude to Asia Rauf for her help and for her encouragements.

\footnotetext[1]{This paper was supported by CNCSIS, ID-PCE, 51/2007}

\newpage
\section{Stanley depth of monomial irreducible ideals.}

\begin{lema}
Let $v_1,\ldots,v_m\in K[x_2,\ldots,x_m]$ be some monomials and let $a$ be a positive integer.
Let $I=(x_1^av_1,v_2,\cdots,v_m)$ and $I'=(x_1^{a+1}v_1,v_2,\cdots,v_m)$.
Then $\sdepth(I)=\sdepth(I')$.
\end{lema}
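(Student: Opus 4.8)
The plan is to reduce everything to a statement about the Herzog--Vladoiu--Zheng posets $P^g_\bullet$ and then observe that passing from $I$ to $I'$ only moves one ``ceiling slice'' of the poset. Set $S'=K[x_2,\dots,x_n]$, $J=(v_2,\dots,v_m)S'$ and $J^{+}=(v_1,v_2,\dots,v_m)S'\supseteq J$, choose $\bar g=(g_2,\dots,g_n)$ with $x^{\bar g}$ divisible by every $v_i$ in $S'$, and write $Q=P^{\bar g}_{J}\subseteq Q^{+}=P^{\bar g}_{J^{+}}$. Since $x_1^{a}v_1\mid x^{g}$ already for $g=(a,\bar g)$, we may compute $\sdepth(I)$ from $P^{g}_I$ with this $g$, and likewise $\sdepth(I')$ from $P^{g'}_{I'}$ with $g'=(a+1,\bar g)$. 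Checking which exponent vectors are divisible by a generator (the first generator $x_1^{a}v_1$ divides $x^{\sigma}$ exactly when $\sigma_1\ge a$ and the $(x_2,\dots,x_n)$-part of $\sigma$ lies in $P^{\bar g}_{(v_1)}$) gives, writing a point as $(t,\mu)$ with $t$ the $x_1$-exponent,
$$P^{g}_{I}=\bigl([0,a-1]\times Q\bigr)\sqcup\bigl(\{a\}\times Q^{+}\bigr),\qquad P^{g'}_{I'}=\bigl([0,a]\times Q\bigr)\sqcup\bigl(\{a+1\}\times Q^{+}\bigr),$$
and $\rho(t,\mu)=\rho_{\bar g}(\mu)$ unless $t$ equals the top $x_1$-level, in which case $\rho(t,\mu)=1+\rho_{\bar g}(\mu)$ (here $\rho_{\bar g}$ is the evident $\rho$ for the box $[\mathbf 0,\bar g]$). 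Thus it suffices to prove: if $P^{(h)}:=\bigl([0,h]\times Q\bigr)\sqcup\bigl(\{h+1\}\times Q^{+}\bigr)$ carries this same $\rho$, then $\sdepth P^{(h)}$ does not depend on $h\ge 0$; for then $\sdepth(I)=\sdepth P^{(a-1)}=\sdepth P^{(a)}=\sdepth(I')$.

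Now $P^{(h+1)}$ is exactly $P^{(h)}$ with one extra copy of the slice $Q$ inserted directly below the ceiling slice $Q^{+}$ — this is a legitimate poset of the required shape because the chain of slices $Q\subseteq Q\subseteq Q^{+}$ is still nondecreasing. I will prove $\sdepth P^{(h+1)}\ge\sdepth P^{(h)}$ by transporting a partition $P^{(h)}=\bigsqcup_i[c_i,d_i]$ across this insertion: keep every interval whose top lies on a level $<h$; shift every interval contained in the top slice one step up in the $x_1$-direction; and stretch every remaining interval (these are exactly the ones meeting level $h$) from $[c_i,d_i]$ to $[c_i,d_i+e_1]$. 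The verification that this is again a partition is the heart of the argument: the stretched intervals already tile the slice $\{h\}\times Q$, so after stretching they tile the newly created slice $\{h+1\}\times Q$ as well, while the tops of the stretched intervals that reached the old ceiling, together with the shifted top-slice intervals, still tile $\{h+2\}\times Q^{+}$; disjointness is inherited because all the ``$(x_2,\dots,x_n)$-footprints'' involved were disjoint to begin with. Finally each top $d_i$ either stays on a level strictly below the ceiling or is moved from one ceiling level to the next, and $\rho$ does not separate two $x_1$-levels other than by being the ceiling, so $\rho(d_i)$ is unchanged; hence $\min_i\rho(d_i)$ is preserved.

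For the opposite inequality I delete the same slice. Let $L=\{h+1\}\times Q$ be the slice just below the ceiling of $P^{(h+1)}$, and let $\pi$ be the order-isomorphism of $P^{(h+1)}\setminus L$ onto $P^{(h)}$ obtained by closing the gap (the identity on levels $<h+1$, and $t\mapsto t-1$ on levels $>h+1$). Given a partition $P^{(h+1)}=\bigsqcup_i[c_i,d_i]$, replace each block by $\pi\bigl([c_i,d_i]\setminus L\bigr)$. A block contained in $L$ disappears, and every other block maps to an honest interval of $P^{(h)}$; these images are disjoint and cover $P^{(h)}$ because $\pi$ is a bijection and the $[c_i,d_i]\setminus L$ partition $P^{(h+1)}\setminus L$. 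As before, the new top of each surviving block has the same $\rho$ as the old one (if the old top lay in $L$ it is discarded, but the new top sits one level lower, still below the ceiling, with the same $\rho_{\bar g}$-value). So $\sdepth P^{(h)}\ge\sdepth P^{(h+1)}$, and combined with the previous paragraph $\sdepth P^{(h)}=\sdepth P^{(h+1)}$, which finishes the proof.

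The only genuinely delicate point — where I expect all the care to go — is the insertion step: one must check that stretching \emph{every} interval that meets level $h$ (including those already reaching the ceiling slice, and degenerate one-level intervals sitting on level $h$) creates neither a hole nor an overlap on the brand-new slice, and introduces no overlap on the shifted ceiling slice. Everything else is routine once $P^{g}_I$ and $P^{g'}_{I'}$ have been identified in the form above, and since the two transport maps are visibly inverse to one another this book-keeping is essentially the whole of the argument.
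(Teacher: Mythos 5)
Your proposal is correct and takes essentially the same route as the paper: both reduce the statement via \cite[Theorem 2.4]{hvz} to transporting an interval partition between $P^{g}_I$ and $P^{g'}_{I'}$ by the same conditional shift along $e_1$ (stretch the intervals meeting the level below the ceiling, shift the ceiling intervals up; conversely collapse the slice below the ceiling, discarding the blocks that become empty), with $\rho$ of every top preserved. Your decomposition $\bigl([0,h]\times Q\bigr)\sqcup\bigl(\{h+1\}\times Q^{+}\bigr)$ merely repackages the paper's coordinatewise casework in a cleaner slice-by-slice form.
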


\begin{proof}
Let $g\in \mathbb N^n$ such that $x^g=LCM(x_1^av_1,v_2,\cdots,v_m)$, where $x^g=x_1^{g(1)}\cdots x_n^{g(n)}$. Obviously, $x^{g'}:=x_1x^g=LCM(x_1^{a+1}v_1,v_2,\cdots,v_m)$.
We denote $P=P_{I}^{g}$ and $P'=P_{I'}^{g'}$. According with \cite[Theorem 2.4]{hvz}, we choose a partition $\bigcup_{i=1}^{r}[c_i,d_i]$ of $P$, with $\sdepth(I)=\min\{\rho(d_i)|\;i\in[r]\}$. 
We will construct a partition $P'=\bigcup_{i=1}^{r}[c'_i,d'_i]$ with $\min\{\rho(d_i)|\;i\in [r] \} = \min\{\rho(d'_i)|\;i\in [r] \}$.
Therefore, $\sdepth(I')\geq \sdepth(I)$.

Suppose $\bigcup_{i=1}^{r}[c_i,d_i]$ is a partition is a partition of $P$. We define:
\[c'_i = \begin{cases} c_i,& if\;c_i(1)<a \\ c_i+e_1, & if\;c_i(1)=a \end{cases}
         \;and\; 
  d'_i = \begin{cases} d_i,& if\;d_i(1)<a-1 \\ d_i+e_1, & if\;d_i(1)\geq a-1 \end{cases}. \]
Note that $\rho(d_i)=\rho(d'_i)$ for all $i\in [r]$.
Indeed, $d'_i(1)=g'(1)=a+1$ if and only if $d_i(1)=g(1)=a$ and $d'_i(j)=g(j)$ if and only if $d_i(j)=g(j)$, for all $j\geq 2$.
We claim that $\bigcup_{i=1}^{r}[c'_i,d'_i]$ is a partition of $P'$.

Firstly, we show that $P'=\bigcup_{i=1}^{r}[c'_i,d'_i]$. Let $\sigma\in P'$. If $\sigma(1)\leq a$ it follows that $\sigma\in P$, thus
$\sigma\in [c_i,d_i]$ for some $i$. Indeed, if $\sigma\notin P$ then, since $\sigma\in P'$ it follows that $x_1^{a+1}v_1 |x^{\sigma}$ and therefore $\sigma(1)=a+1$, a contradiction.

If $\sigma(1)<a$, it follows that $c_i(1)<a$ and therefore $c'_i=c_i$. We get $c'_1 = c_i \leq d_i\leq d'_i$, thus $\sigma\in [c'_i,d'_i]$. 
If $\sigma(1)=a$, it follows that $d_i(1)=a$ and $d'_i(1)=a+1$. Suppose $\sigma\notin [c'_i,d'_i]$. Since $\sigma-e_1\in P$ (this is true, because $\sigma\in P'$, $\sigma(1)\neq a+1$ and thus $x_{1}^{a+1}v_1$ does not divide $x^{\sigma}$) it follows that $\sigma-e_1\in [c_j,d_j]$ for some $j\neq i$. But $(\sigma-e_1)(1)=a-1$ which implies $d_j(1)\geq a-1$. It follows that $\sigma \leq d_j+e_1=d'_j$ and therefore $\sigma\in [c'_j,d'_j]$, because $c'_j=c_j$.

If $\sigma(1)=a+1$, it follows that $\sigma-e_1\in P$ and therefore $\sigma-e_1\in[c_i,d_i]$ for some $i$. Indeed, if $x_1^{a+1}v_1|x^{\sigma}$ then $x_1^av_1|x^{\sigma-e_1}$, else, if $v_j|x^{\sigma}$ for some $j\geq 2$ then $v_j|x^{\sigma-e_1}$. But $(\sigma-e_1)(1)=a$ and therefore
$d_i(1)=a$. We get $\sigma\leq d_i + e_1 =d'_i$ and thus $\sigma\in [c'_i,d'_i]$.

Now, we must prove that for any $i\neq j$, we have $[c'_i,d'_i]\cap[c'_j,d'_j]=\emptyset$. Assume by contradiction, that there exists some $\sigma\in [c'_i,d'_i]\cap[c'_j,d'_j]$. We consider two cases.

(1) Case $d'_j = d_j$. If $\sigma(1)<a$ then $\sigma\leq d_i$ and $\sigma\geq c'_i=c_i$. As $d_j=d'_j\geq \sigma\geq c'_j = c_j$ we get $\sigma\in [c_i,d_i]\cap[c_j,d_j]=\emptyset$, which is false. If $\sigma(1)\geq a$ then $d'_j(1)\geq a$ and so $d_j(1)\geq a-1$. But then, $d'_j=d_j+e_1$, a contradiction.

(2) Case $d'_j>d_j$, $d'_i>d_i$. If $\sigma(1)<a$ then as before, $\sigma\leq d_i$, $\sigma\leq d_j$ and so $\sigma\in [c_i,d_i]\cap[c_j,d_j]=\emptyset$, which is false. If $\sigma(1)\geq a$ then $\sigma-e_1\leq d_i$, $\sigma-e_1\leq d_j$. On the other hand, if $c_i(1)< a$ then $\sigma(1)-1\geq c_i(1)$ and so $\sigma-e_1\geq c_i$, because $\sigma(j)\geq c'_i(j)=c_i(j)$ for $j>1$. Thus $\sigma-e_1\in [c_i,d_i]$. The same thing follows if $c_i(1)=a_1$ because $\sigma-e_1\geq c'_i - e_1 = c_i$. Similarly, $\sigma - e_1\in [c_j,d_j]$, which gives again a contradiction.

Suppose $\bigcup_{i=1}^{r}[c'_i,d'_i]$ is a partition of $P'$ with $\sdepth(I')=\min\{\rho(d'_i)|\; i\in [r]\}$. We define
\[c_i = \begin{cases} c'_i,&if\; c'_i(1)\leq a \\ c'_i-e_1, &if\; c'_i(1)=a+1 \end{cases}
         \;and\; 
  d_i = \begin{cases} d'_i,& if\;d'_i(1)<a \\ d'_i-e_1, & if\;d'_i(1)\geq a \end{cases}.\]
We may have $[c_i,d_i]=\emptyset$ for some indexes $i$. We claim that $\bigcup_{[c_i,d_i]\neq \emptyset}[c_i,d_i]$ is a partition of $P$. Suppose the claim is true. Note that $\rho(d_i)=\rho(d'_i)$ for all $1\leq i\leq m$. It follows that \[ \min\{\rho(d'_i)|i\in[r]\} \leq \min\{\rho(d_i)|i\in[r]\; and \; [c_i,d_i]\neq \emptyset \} \] and thus, $\sdepth(I')\leq \sdepth(I)$.

Now, we prove the claim. Let $\sigma\in P$. If $\sigma(1)<a$, it follows that $\sigma\in P'$ and therefore $\sigma\in [c'_i,d'_i]$ for some $i$. Indeed, since $\sigma(1)<a$ it follows that $x^av_1$ does not divide $x^{\sigma}$ and therefore, $v_j|x^{\sigma}$ for some $j>1$. Since $\sigma(1)<a$ we get $c'_i(1)<a$ and therefore $c_i=c'_i$. If $d'_i(1)\geq a$ it follows $d_i=d'_i-e_1$, otherwise $d_i=d'_i$. In both cases, $\sigma(1)\leq d_i(1)$. Thus $\sigma\in [c_i,d_i]$.

Suppose $\sigma(1)=a$. We have $\sigma+e_1\in P'$, because $(\sigma+e_1)(1)=a+1$. Therefore $\sigma+e_1 \in [c'_i,d'_i]$ for some $i$. It follows that $d'_i(1)=a+1$ and therefore $d_i=d'_i-e_1$. On the other hand, $\sigma(1)\leq a$. Thus, we get $\sigma\in [c_i,d_i]$.

Now, we must prove that for any $i\neq j$, we have $[c_i,d_i]\cap[c_j,d_j]=\emptyset$. If $c_i=c'_i$ and $c_j=c'_j$ it follows that $[c_i,d_i]\subset [c'_i,d'_i]$ and $[c_j,d_j]\subset [c'_j,d'_j]$. Thus, there is nothing to prove. We can assume $c_i=c'_i-e_1$. In this case, it follows that $c_i(1)=d_i(1)=a$ and $[c_i,d_i]=[c'_i-e_1,d'_i-e_1]$. If $d'_j(1)<a+1$, then $d_j(1)<a$ and thus $[c_i,d_i]\cap[c_j,d_j]=\emptyset$. Otherwise, $d'_j(1)=a+1$ and $d_j(1)=a$. Let $\sigma \in [c_i,d_i]$. Obviously, $\sigma+e_1\in [c'_i,d'_i]$. On the other hand, if $\sigma \in [c_j,d_j]$, then $\sigma+e_1\in [c'_j,d'_j]$, a contradiction.
\end{proof}

The following theorem is a direct consequence of \cite[Theorem 1.1]{par} and \cite[Theorem 2.4]{hvz}.

\begin{teor}
$\sdepth((x_1,\ldots,x_n))=\left\lceil \frac{n}{2} \right\rceil$.
\end{teor}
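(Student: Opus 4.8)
The plan is to reduce the statement, via \cite[Theorem 2.4]{hvz}, to the combinatorial result of \cite[Theorem 1.1]{par}, and to supply an easy averaging argument for the matching upper bound. Write $\me=(x_1,\dots,x_n)$. First I would apply the setup of \cite{hvz} with the choice $g=(1,\dots,1)$, which is admissible because $x_i\mid x^g$ for every $i$. With this choice $P_{\me}^{g}=\{\sigma\in\mathbb N^n:\ \sigma\le g\ \text{and}\ x_i\mid x^{\sigma}\ \text{for some}\ i\}$ consists exactly of the nonzero $0/1$-vectors of length $n$; sending such a vector to its support identifies $P_{\me}^{g}$ with the poset $\mathcal P([n])\setminus\{\emptyset\}$ ordered by inclusion, and under this identification an interval $[c,d]\subseteq P_{\me}^{g}$ becomes an interval $[X,Y]$ with $X\subseteq Y\subseteq[n]$ and $X\neq\emptyset$, for which $\rho(d)=\#\{j:\ d(j)=g(j)=1\}=|Y|$. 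Hence \cite[Theorem 2.4]{hvz} tells us that $\sdepth(\me)$ is the maximum, over all partitions $\mathcal P([n])\setminus\{\emptyset\}=\bigcup_{i=1}^{r}[X_i,Y_i]$ into intervals, of the quantity $\min_i|Y_i|$.

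For the lower bound I would simply invoke \cite[Theorem 1.1]{par}: it produces such a partition with $|Y_i|\ge n/2$ for all $i$; since $|Y_i|\in\mathbb Z$ this forces $|Y_i|\ge\lceil n/2\rceil$, and therefore $\sdepth(\me)\ge\min_i|Y_i|\ge\lceil n/2\rceil$ by the description above.

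For the upper bound, fix an arbitrary partition $\mathcal P([n])\setminus\{\emptyset\}=\bigcup_{i=1}^{r}[X_i,Y_i]$. Each atom $\{i\}$ lies in exactly one of these intervals, and since the bottom of that interval is a nonempty subset of $\{i\}$, the interval has the form $[\{i\},Y_i]$ with $i\in Y_i$; the $n$ intervals obtained this way are pairwise distinct. If $i\neq j$ with $j\in Y_i$ and $i\in Y_j$, then $\{i,j\}$ would lie in both $[\{i\},Y_i]$ and $[\{j\},Y_j]$, contradicting disjointness; so for each unordered pair $\{i,j\}$ at most one of $j\in Y_i$, $i\in Y_j$ holds. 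Consequently $\sum_{i=1}^{n}|Y_i|=n+\#\{(i,j):\ i\neq j,\ j\in Y_i\}\le n+\binom{n}{2}=\binom{n+1}{2}$, so $\min_{i=1}^{n}|Y_i|\le(n+1)/2$ and, being an integer, $\min_{i=1}^{n}|Y_i|\le\lceil n/2\rceil$. Since these are among the intervals of the partition, $\min$ over all intervals is at most $\lceil n/2\rceil$; taking the maximum over partitions gives $\sdepth(\me)\le\lceil n/2\rceil$. Combining the two bounds proves the theorem.

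The only deep ingredient here is \cite[Theorem 1.1]{par}, which I treat as a black box; everything else is bookkeeping. I expect the one place that needs care is pinning down the dictionary in the first paragraph: checking that the partition supplied by \cite{par} really is a partition of $P_{\me}^{g}$ in the precise sense required by \cite[Theorem 2.4]{hvz}, and that $\rho$ translates to the cardinality of the top element exactly as claimed. Once that is settled, both inequalities are immediate, the upper bound via the counting estimate and the lower bound by direct citation.
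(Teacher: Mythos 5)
Your proof is correct and follows essentially the same route as the paper, whose entire argument is the citation of \cite[Theorem 1.1]{par} together with \cite[Theorem 2.4]{hvz}: your identification of $P_{\me}^{(1,\ldots,1)}$ with $\mathcal P([n])\setminus\{\emptyset\}$, with $\rho(d)$ becoming the cardinality of the top set of an interval, is exactly the Herzog--Vladoiu--Zheng reduction, and your lower bound is the cited partition result. The one place you go beyond the paper is the inequality $\sdepth(\me)\le\left\lceil n/2\right\rceil$, which the paper delegates to the cited sources (it is established in \cite{hvz}), whereas you give a short self-contained counting argument: each atom $\{i\}$ lies in a distinct interval $[\{i\},Y_i]$, for each pair $i\neq j$ at most one of $j\in Y_i$, $i\in Y_j$ can hold by disjointness, hence $\sum_{i=1}^n|Y_i|\le n+\binom{n}{2}$ and the minimum is at most $\left\lceil n/2\right\rceil$. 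That argument is correct and makes the upper bound independent of the literature; the only bookkeeping point is that you implicitly use both directions of \cite[Theorem 2.4]{hvz} (every interval partition of $P_I^g$ yields a Stanley decomposition with the corresponding sdepth, and some partition attains $\sdepth(I)$), the first for your lower bound and the second for your upper bound, which is indeed what that theorem provides.
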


As a consequence of Lemma $1.1$ and Theorem $1.2$, we get.

\begin{teor}
Let $a_1,\ldots,a_n$ be some positive integers. Then:
\[ \sdepth((x_1^{a_1},\ldots,x_n^{a_n})) = \sdepth((x_1,\ldots,x_n)) = \left\lceil \frac{n}{2} \right\rceil. \]
In particular, $\sdepth((x_1^{a_1},\ldots,x_m^{a_m})) = n - m + \left\lceil \frac{m}{2}\right\rceil$ for any $1\leq m\leq n$.
\end{teor}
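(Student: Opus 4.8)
The plan is to obtain the first equality by iterating Lemma~1.1 and then invoke Theorem~1.2 for the second. Starting from the ideal $(x_1^{a_1},\ldots,x_n^{a_n})$, I would write $v_1=\cdots=v_n=1$ in the notation of Lemma~1.1 and apply that lemma $a_1-1$ times to the first variable to reduce the exponent of $x_1$ from $a_1$ to $1$, each step preserving the Stanley depth. Then, permuting the roles of the variables (the symmetry of $S$ in $x_1,\ldots,x_n$ makes Lemma~1.1 applicable to any variable), I would repeat this for $x_2$, then $x_3$, and so on, successively lowering each exponent $a_i$ down to $1$. After all these steps I arrive at $(x_1,\ldots,x_n)$ with the same Stanley depth, and Theorem~1.2 gives $\sdepth((x_1,\ldots,x_n))=\lceil n/2\rceil$.

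For the ``in particular'' statement, I would use the relation $\sdepth(IS[x_{n+1}])=\sdepth(I)+1$ recalled in the introduction from \cite{hvz}. Viewing $(x_1^{a_1},\ldots,x_m^{a_m})\subset S=K[x_1,\ldots,x_n]$ as the extension of the ideal $(x_1^{a_1},\ldots,x_m^{a_m})\subset K[x_1,\ldots,x_m]$ along the $n-m$ extra variables $x_{m+1},\ldots,x_n$, applying that relation $n-m$ times yields
\[
\sdepth\bigl((x_1^{a_1},\ldots,x_m^{a_m})S\bigr)=\sdepth\bigl((x_1^{a_1},\ldots,x_m^{a_m})K[x_1,\ldots,x_m]\bigr)+(n-m).
\]
By the first part of the theorem, applied in the ring $K[x_1,\ldots,x_m]$, the right-hand inner term equals $\lceil m/2\rceil$, so the total is $n-m+\lceil m/2\rceil$, as claimed.

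The only point that needs a little care is the bookkeeping in the iteration: I must check that after reducing the exponent of $x_1$ to $1$, the hypotheses of Lemma~1.1 are still met when I turn to $x_2$ — namely that the remaining generators, now $x_1,x_3^{a_3},\ldots,x_n^{a_n}$, lie in the polynomial ring in the variables other than $x_2$, which is clear since none of them involves $x_2$. So there is no real obstacle here; the argument is a straightforward induction, and I would present it as: induct on $a_1+\cdots+a_n$, the base case $a_1=\cdots=a_n=1$ being Theorem~1.2, and the inductive step being a single application of Lemma~1.1 (after a permutation of variables) to some $a_i>1$. The extension-of-scalars step for the last assertion is equally routine given the cited result from \cite{hvz}.
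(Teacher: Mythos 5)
Your proposal is correct and follows essentially the same route as the paper: the first equality is obtained by repeated application of Lemma~1.1 (reducing each exponent to $1$) together with Theorem~1.2, and the ``in particular'' part via the extension result $\sdepth(IS[x_{n+1}])=\sdepth(I)+1$ of Herzog--Vladoiu--Zheng, which is precisely what the paper's citation of their Proposition~3.6 invokes. Your explicit induction on $a_1+\cdots+a_n$ and the symmetry remark just spell out what the paper compresses into ``we apply Lemma~1.1 several times.''
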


\begin{proof}
We apply Lemma $1.1$ several times.
The second part of the theorem, follows from \cite[Proposition 3.6]{hvz}.
\end{proof}

\newpage
\section{Stanley depth of monomial complete intersections}

\begin{teor}
Let $I\subset S$ be a complete intersection monomial ideal. Then $\sdepth(I)=\sdepth(\sqrt{I})$.
\end{teor}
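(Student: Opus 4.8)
The plan is to reduce the general complete intersection $I=(x_{i_1}^{a_{i_1}}w_1,\ldots)$—wait, more precisely, after reordering variables a complete intersection monomial ideal has the form $I=(u_1,\ldots,u_m)$ where the $u_k$ have pairwise disjoint supports and each $u_k=x_{j}^{a_j}\cdots$ is a pure power product on its own block of variables. Its radical is $\sqrt I=(\operatorname{rad}(u_1),\ldots,\operatorname{rad}(u_m))$, obtained by lowering every exponent to $1$. So the statement is that lowering exponents one step at a time—i.e.\ replacing some $x_\ell^{a_\ell}$ appearing in a generator $u_k$ by $x_\ell^{a_\ell-1}$ when $a_\ell\geq 2$—does not change the Stanley depth, and then iterating.

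First I would observe that a single such step is almost exactly the situation of Lemma $1.1$: there $I=(x_1^av_1,v_2,\ldots,v_m)$ with $v_1,\ldots,v_m\in K[x_2,\ldots,x_n]$, and passing to $I'=(x_1^{a+1}v_1,v_2,\ldots,v_m)$ preserves $\sdepth$. The only mismatch is that in a complete intersection the variable $x_1$ whose exponent we change may also be the ``$x_1$'' of $u_k=x_1^{a_1}x_2^{a_2}\cdots$, i.e.\ several pure powers sit inside the same generator, so $v_1$ is not a monomial in $K[x_2,\ldots,x_n]$ free of $x_1$. To handle this I would either (a) reprove Lemma $1.1$ verbatim allowing $v_1$ to involve $x_1$ as long as the variable being modified does not appear in $v_2,\ldots,v_m$ (which holds here by disjointness of supports), or (b) more cleanly, note that the complete intersection structure lets us isolate one generator: write $u_k=x_\ell^{a_\ell}\cdot u_k'$ and apply Lemma $1.1$ in the polynomial subring generated by $x_\ell$ together with the variables of $u_k'$ and of $u_1,\ldots,\widehat{u_k},\ldots,u_m$, treating $x_\ell^{a_\ell}u_k'$ as the distinguished generator ``$x_\ell^{a_\ell}v_1$'' with $v_1=u_k'$. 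The key point making (b) legitimate is precisely that no other generator is divisible by $x_\ell$, so $v_2,\ldots,v_m\in K[x_1,\ldots,\widehat{x_\ell},\ldots,x_n]$, matching the hypothesis of Lemma $1.1$ after renaming $x_\ell\mapsto x_1$.

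Then I would conclude by induction: starting from $I$, repeatedly apply the single-step lemma to bring down one exponent exceeding $1$, at each stage still having a complete intersection, until all exponents equal $1$, reaching $\sqrt I$; since each step preserves $\sdepth$, $\sdepth(I)=\sdepth(\sqrt I)$. I would also spell out, probably as a preliminary sentence or short lemma, why a monomial complete intersection has disjoint-support pure-power generators and why its radical is the squarefree ideal obtained by setting exponents to $1$ — this is the standard structure theorem for complete intersection monomial ideals (the minimal generators form a regular sequence, forcing them to be of this shape).

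The main obstacle I expect is the bookkeeping in point (b): verifying that applying Lemma $1.1$ inside a suitably chosen subring genuinely computes the Stanley depth of the full ideal. One must check that enlarging or shrinking the ambient polynomial ring by variables that divide none of the generators shifts $\sdepth$ in a controlled way — but this is exactly the ``$\sdepth(IS[x_{n+1}])=\sdepth(I)+1$'' result of Herzog–Vladoiu–Zheng quoted in the introduction, so the fix is to add back the missing variables on both sides. Concretely: if $J$ is the image of $I$ (resp.\ of the single-step-modified ideal) in the subring, then $I=JS$ and $\sdepth(I)=\sdepth(J)+(\text{number of variables not dividing any generator})$, and the same count applies after the step, so equality of $\sdepth(J)$ for the two ideals — which is Lemma $1.1$ — yields equality for $I$ and its modification. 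Everything else is routine.
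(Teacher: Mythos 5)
Your proposal is correct and takes essentially the same route as the paper: both arguments lower one exponent at a time via Lemma $1.1$, using that the minimal generators of a monomial complete intersection have pairwise disjoint supports, until the squarefree radical is reached. The only remark worth making is that the ``mismatch'' you describe does not actually arise: writing $u_k=x_\ell^{a_\ell}u_k'$ with $x_\ell\nmid u_k'$, the cofactor $u_k'$ and all the other generators lie in $K[x_1,\ldots,\widehat{x_\ell},\ldots,x_n]$, so after renaming $x_\ell\mapsto x_1$ Lemma $1.1$ applies verbatim in the full ring $S$, and neither the passage to a subring nor the re-extension via $\sdepth(IS[x_{n+1}])=\sdepth(I)+1$ in your option (b) is needed.
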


\begin{proof}
Let $I=(v_1,\ldots,v_m)\subset S$ be a complete intersection monomial ideal. If $I$ is square free, then there is nothing to prove. Otherwise, we may assume $x_1^2|v_1$. By Lemma $1.1$, $\sdepth(I)=\sdepth((v_1/x_1,v_2,\ldots,v_m))$. Thus, we can replace $I$ with
$(v_1/x_1,v_2,\ldots,v_m)$ and than we apply the previous step. After a finite number of steps, we must stop. The last ideal obtained with this algorithm will be $\sqrt{I}$.
\end{proof}

Now, we prove a version of a result of Sumiya Nasir \cite[Corollary 3.2]{sum} for ideals.

\begin{lema}
Let $I'\subset S[x_{n+1}]$ be a monomial ideal. We consider the homomorphism $\varphi:S[x_{n+1}]\rightarrow S$, $\varphi(x_i)=x_i$ for $i\leq n$ and $\varphi(x_{n+1})=1$. Let $I=\varphi(I')$. Then $\sdepth(I')\leq \sdepth(I)+1$.
\end{lema}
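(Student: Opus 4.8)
The plan is to work with the combinatorial description of Stanley depth via posets of the form $P_I^g$ and the Herzog--Vladoiu--Zheng partition theorem, transferring a good partition of $P_{I'}^{g'}$ for $I'$ to one of $P_I^g$ for $I$. Fix $g'\in\mathbb N^{n+1}$ with $v|x^{g'}$ for all $v\in G(I')$, and let $g=(g'(1),\dots,g'(n))$; one checks that $g$ works as a choice vector for $I=\varphi(I')$, since $\varphi$ just erases the $x_{n+1}$-exponent of each generator. The map on exponent vectors induced by $\varphi$ is the projection $\pi:\mathbb N^{n+1}\to\mathbb N^n$ forgetting the last coordinate; the first thing to verify is that $\pi$ maps $P_{I'}^{g'}$ onto $P_I^g$ (if $v'|x^\sigma$ then $\varphi(v')=\pi(v')$-monomial divides $x^{\pi(\sigma)}$), so every element of $P_I^g$ has at least one preimage.

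By \cite[Theorem 2.4]{hvz}, choose a partition $P_{I'}^{g'}=\bigcup_{i=1}^r[c_i,d_i]$ with $\sdepth(I')=\min_i\rho(d_i)$, where the $\rho$ here is computed with respect to $g'$ in $n+1$ variables. For each $i$, set $\bar c_i=\pi(c_i)$ and $\bar d_i=\pi(d_i)$. The key observation is that $[\bar c_i,\bar d_i]\supseteq\pi([c_i,d_i])$ with equality, and more importantly that $\rho_{g}(\bar d_i)\ge\rho_{g'}(d_i)-1$: deleting the last coordinate can destroy at most the one equality $d_i(n+1)=g'(n+1)$, while all equalities $d_i(j)=g(j)$ for $j\le n$ are preserved. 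Hence $\min_i\rho_g(\bar d_i)\ge\min_i\rho_{g'}(d_i)-1=\sdepth(I')-1$. If the intervals $[\bar c_i,\bar d_i]$ already formed a partition of $P_I^g$, we would be done, giving a Stanley decomposition of $I$ of Stanley depth at least $\sdepth(I')-1$, i.e. $\sdepth(I)\ge\sdepth(I')-1$.

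The main obstacle, and the part requiring real work, is that the projected intervals $[\bar c_i,\bar d_i]$ need \emph{not} be disjoint and need not cover $P_I^g$ in a clean way — several fibers of $\pi$ may meet the same interval, or the images of two intervals may overlap. To fix this I would process the fibers one coordinate-value at a time: for $\tau\in P_I^g$, look at the chain $\tau^{(0)},\tau^{(1)},\dots$ of preimages in $\mathbb N^{n+1}$ obtained by appending $0,1,2,\dots$ in the last slot (those that actually lie in $P_{I'}^{g'}$), note this chain is an \emph{interval} in $P_{I'}^{g'}$ because $P_{I'}^{g'}$ is an order ideal below $g'$ and $x_{n+1}$ does not divide any generator that isn't already accounted for, and assign $\tau$ to the block $[c_i,d_i]$ containing the \emph{bottom} element $\tau^{(0)}$ of that chain. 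This gives a well-defined surjection onto the blocks, and one checks the resulting preimage sets in $P_I^g$ are exactly intervals $[\bar c_i,\bar d_i']$ with $\bar d_i'\le\bar d_i$, hence still $\rho_g(\bar d_i')\ge\rho_{g'}(d_i)-1$, and that distinct blocks give disjoint intervals. This produces the required partition of $P_I^g$ and finishes the inequality $\sdepth(I)\ge\sdepth(I')-1$, which is exactly $\sdepth(I')\le\sdepth(I)+1$. The one point to be careful about in the write-up is verifying that the bottom of each nonempty fiber-chain really lands in a single block and that incrementing the last coordinate never leaves the block's interval prematurely; this is where the order-ideal structure of $P_{I'}^{g'}$ and the hypothesis that $\varphi$ only kills $x_{n+1}$ are both used.
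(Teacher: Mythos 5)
There is a genuine gap at exactly the point you flag as ``the part requiring real work'': the fiber-bottom assignment does not produce intervals, and your $\rho$-estimate for the resulting blocks does not follow. Concretely, take $n=2$, $I'=(x_1^2x_2^2,\;x_1x_2x_3)\subset K[x_1,x_2,x_3]$, so $I=\varphi(I')=(x_1x_2)$, $g'=(2,2,1)$, $g=(2,2)$. Then $P_{I'}^{g'}=\{(2,2,0),(2,2,1),(1,1,1),(2,1,1),(1,2,1)\}$ and the partition $P_{I'}^{g'}=[(1,1,1),(2,2,1)]\cup[(2,2,0),(2,2,0)]$ has $\min\rho=2=\sdepth(I')$, so it is a legitimate output of \cite[Theorem 2.4]{hvz}. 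The fibers over $(1,1),(2,1),(1,2)$ have bottoms $(1,1,1),(2,1,1),(1,2,1)$, all lying in the first block, while the fiber over $(2,2)$ has bottom $(2,2,0)$, lying in the second. Hence your first block collects $\{(1,1),(2,1),(1,2)\}\subset P_I^g$, which is not an interval: any interval of $\mathbb N^2$ containing $(2,1)$ and $(1,2)$ contains $(2,2)$. The reason is structural: writing $t_0(\tau)$ for the least last coordinate of a preimage of $\tau$ in $P_{I'}^{g'}$, the function $t_0$ is order-reversing, so the set $\{\tau:\ t_0(\tau)\geq c_i(n+1)\}$ is a down-set and its intersection with the box $[\bar c_i,\bar d_i]$ need not be a box whenever $c_i(n+1)>0$.

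A second, independent problem is the inference ``$\bar d_i'\leq\bar d_i$, hence $\rho_g(\bar d_i')\geq\rho_{g'}(d_i)-1$'': lowering the top of an interval can only destroy equalities $d_i(j)=g(j)$, so this inequality would require $\bar d_i'=\bar d_i$ (or at least agreement in the coordinates where $\bar d_i$ meets $g$), which your construction does not guarantee --- and cannot: in the example above, any refinement of $\{(1,1),(2,1),(1,2)\}$ into intervals has tops of $\rho_g\leq 1$, whereas your claimed bound would give $\rho_{g'}((2,2,1))-1=2$. (The final inequality still holds in this example only because another block happens to have small $\rho$, which your argument does not establish in general.) The paper avoids the partition-transfer problem altogether: starting from a Stanley decomposition $I'=\bigoplus_i u_iK[Z_i']$ with optimal Stanley depth, it shows directly that $I=\bigoplus_{x_{n+1}\in Z_i'}(u_i/x_{n+1}^{a_i})K[Z_i'\setminus\{x_{n+1}\}]$, where $a_i$ is the $x_{n+1}$-degree of $u_i$; each Stanley space loses exactly the variable $x_{n+1}$, so $\sdepth(I)\geq\sdepth(I')-1$ is immediate. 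If you want to keep your strategy, you would have to repair the transferred partition globally (not blockwise), which is essentially as hard as the direct argument; I recommend switching to the decomposition-level argument.
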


\begin{proof}
We choose a Stanley
decomposition $\mathcal{D}':\,\,I'=\bigoplus_{i=1}^ru_iK[Z_i']$ of $I'$
with $\sdepth (\mathcal{D}')=\sdepth (I')$. We claim that
$I = \bigoplus_{{x_{n+1}}\in
Z_i'}u_i/{x_{n+1}}^{a_i}K[Z_i'\setminus\{x_{n+1}\}],$ where $a_i$ is
maximum integer such that ${x_{n+1}}^{a_i}$ divides $u_i$.

We consider a monomial $w\in \sum_{{x_{n+1}}\in
Z_i'}u_i/{x_{n+1}}^{a_i}K[Z_i'\setminus\{x_{n+1}\}]$, that is, there
exists $i\in[r]$ such that $w\in
u_i/{x_{n+1}}^{a_i}K[Z_i'\setminus\{x_{n+1}\}]$. It follows that
${x_{n+1}}^{a_i}w\in u_iK[Z_i'\setminus\{x_{n+1}\}]$. So ${x_{n+1}}^{a_i}w\in I'$. Since $x_{n+1}$ does not divide $w$ it follows that $w \in I$.

In order to prove other inclusion we consider a monomial $u\in I$.
Since $I=\varphi(I')$ we have $x_{n+1}^t u\in I'$ for sufficiently large $t$. Then $x_{n+1}^t u\in u_i K[Z'_i]$ for some $i\in [r]$.
If $x_{n+1}\in Z'_i$ then we get $u\in u_i/x_{n+1}^{a_i}K[Z'_i\setminus \{x_{n+1}\}]$. Otherwise increasing $t$ we may suppose that
$x_{n+1}^t u\in u_j K[Z'_j]$ for some $j\in[r]$ with $x_{n+1}\in Z'_j$ and we may continue as above.

Now, we prove
that this sum is direct. Let \[ v\in
u_i/{x_{n+1}}^{a_i}K[Z_i'\setminus\{x_{n+1}\}]\cap
u_j/{x_{n+1}}^{a_j}K[Z_j'\setminus\{x_{n+1}\}] \] be a monomial for
$i\neq j$. We have
$v=u_i/{x_{n+1}}^{a_i}f_i=u_j/{x_{n+1}}^{a_j}f_j$ where $f_i\in
K[Z_i'\setminus\{x_{n+1}\}]$ and $f_j\in K[Z_j'\setminus\{x_{n+1}\}]$
are monomials. We obtain $vx_{n+1}^{a}\in u_iK[Z_i']\cap u_jK[Z_j']$
where $a=\max\{a_i,a_j\}$, since $x_{n+1}\in K[Z_i'],K[Z_j']$ and thus, we get a
contradiction. Hence \linebreak $I=\bigoplus_{{x_{n+1}}\in
Z_i'}u_i/{x_{n+1}}^{a_i}K[Z_i'\setminus\{x_{n+1}\}]$ and therefore
$\sdepth(I)+1\geq \sdepth(I')$.
\end{proof}

\begin{cor}
Let $I\subset S$ be a monomial ideal with
$G(I)=\{v_1,\ldots,v_m\}$. Let $S'=S[x_{n+1}]$ and
$I'=(v_1,\ldots,v_{m-1},x_{n+1}v_m).$ Then $\sdepth(I')\leq \sdepth(I)+1$.
\end{cor}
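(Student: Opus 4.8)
The plan is to obtain this as an immediate consequence of Lemma $2.2$. I would apply that lemma verbatim to the ideal $I'=(v_1,\ldots,v_{m-1},x_{n+1}v_m)\subset S[x_{n+1}]$ together with the homomorphism $\varphi:S[x_{n+1}]\rightarrow S$, $\varphi(x_i)=x_i$ for $i\leq n$ and $\varphi(x_{n+1})=1$, which is exactly the $\varphi$ appearing in the statement of Lemma $2.2$.

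The only verification needed is that $\varphi(I')=I$. Since $\varphi$ is a surjective ring homomorphism, it carries ideals to ideals and a generating set of $I'$ to a generating set of $\varphi(I')$; the images of the generators of $I'$ are $\varphi(v_i)=v_i$ for $i\leq m-1$ (each $v_i$ lies in $S$, hence is fixed by $\varphi$) and $\varphi(x_{n+1}v_m)=v_m$. Therefore $\varphi(I')=(v_1,\ldots,v_{m-1},v_m)=I$. Feeding this into Lemma $2.2$ gives $\sdepth(I')\leq\sdepth(\varphi(I'))+1=\sdepth(I)+1$, which is the assertion.

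There is essentially no obstacle in this argument; the one point deserving a moment's attention is confirming that the images of generators genuinely compute $\varphi(I')$, which holds precisely because $\varphi$ is onto. Note also that I do not need to worry about whether $\{v_1,\ldots,v_{m-1},x_{n+1}v_m\}$ or its image is a \emph{minimal} generating set, since Lemma $2.2$ imposes no such requirement on $\varphi(I')$.
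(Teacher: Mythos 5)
Your argument is correct and is exactly the intended one: the paper states Corollary $2.3$ as an immediate consequence of Lemma $2.2$, and your verification that $\varphi(I')=(v_1,\ldots,v_{m-1},v_m)=I$ (using surjectivity of $\varphi$) is the only step needed.
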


\begin{teor}
Let $I\subset S$ be a monomial complete intersection ideal. Then:
\[ 1 + n - m \leq \sdepth(I) \leq \left\lceil \frac{m}{2} \right\rceil + n - m.\]
In particular, $I$ satisfy the Stanley's conjecture.
\end{teor}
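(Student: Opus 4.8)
The plan is to treat the two inequalities separately, reducing everything to the squarefree case already handled in Theorem 2.1 and Theorem 1.3. Write $I=(v_1,\ldots,v_m)$ for the complete intersection, so the $v_i$ have pairwise disjoint supports; after relabeling the variables we may assume $\sqrt I=(y_1,\ldots,y_m)$ where $y_i$ is the squarefree monomial built from the variables dividing $v_i$, and the remaining variables $x_{m+1},\ldots,x_n$ do not occur in any $v_i$. By Theorem 2.1 we have $\sdepth(I)=\sdepth(\sqrt I)=\sdepth((y_1,\ldots,y_m))$, so it suffices to prove
\[ 1+n-m \le \sdepth((y_1,\ldots,y_m)) \le \left\lceil \frac m2 \right\rceil + n-m .\]

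For the upper bound, the idea is to compare $(y_1,\ldots,y_m)$ with the prime ideal $(x_1,\ldots,x_m)$ by applying Corollary 2.3 repeatedly. Each $y_i$ is a product of $\deg y_i$ distinct variables; thinking of $y_i$ as $x_i\cdot(\text{extra variables})$, Corollary 2.3 says that multiplying a generator by a fresh variable can only raise the Stanley depth by one. Concretely, one passes from $(x_1,\ldots,x_m)$ in $K[x_1,\ldots,x_m]$ up to $(y_1,\ldots,y_m)$ in $K[x_1,\ldots,x_n]$ by a sequence of $n-m$ steps, each adjoining one new variable $x_{m+k}$ and attaching it to the appropriate generator; by Corollary 2.3 each step increases the Stanley depth by at most $1$, so $\sdepth((y_1,\ldots,y_m)) \le \sdepth((x_1,\ldots,x_m)) + (n-m) = \lceil m/2\rceil + (n-m)$, using Theorem 1.3 for the base case. (One must check that Corollary 2.3 applies even though $x_{m+k}v_m$ is being used as a generator while $v_m$ itself is not a generator of the smaller ideal — but $v_m$ has full support in the smaller ring, so $G$ of the image is as required.)

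For the lower bound, the plan is to exhibit an explicit Stanley decomposition of $(y_1,\ldots,y_m)$ with all $|Z_i|\ge 1+n-m$, or equivalently to invoke the Herzog–Vladoiu–Zheng machinery. The cleanest route: the variables $x_{m+1},\ldots,x_n$ are "free", so $(y_1,\ldots,y_m)$ is the extension to $K[x_1,\ldots,x_n]$ of the ideal $J=(y_1,\ldots,y_m)\subset K[x_1,\ldots,x_m]$, and by the HVZ result on variable extensions quoted in the introduction, $\sdepth((y_1,\ldots,y_m)) = \sdepth(J) + (n-m)$. So it remains to show $\sdepth(J)\ge 1$ for the squarefree complete intersection $J$ in $m$ variables — but $J\ne 0$, and any nonzero monomial ideal has Stanley depth at least $1$ (take any Stanley decomposition; each $u_iK[Z_i]$ with $u_i\ne 0$ already forces $|Z_i|\ge 0$, and one checks a decomposition with all $Z_i$ nonempty exists, e.g. from the HVZ partition, since $\sqrt J$ being $(y_1,\ldots,y_m)$ with $m\ge 1$ gives $\rho\ge 1$ on a suitable partition). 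This yields $\sdepth(I) \ge 1 + (n-m)$. Finally, $\depth(I) = n-m+1$ for a complete intersection of height $m$, so both inequalities sandwich $\depth(I)$ from below, giving Stanley's conjecture for $I$.

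The main obstacle I expect is the upper-bound step: making precise that Corollary 2.3 can be iterated to interpolate between $(x_1,\ldots,x_m)$ and $(y_1,\ldots,y_m)$, i.e. that at each stage the intermediate ideal really is the $\varphi$-image of the next one and that its minimal generating set is the expected one (no unexpected redundancy among generators when a variable is adjoined). The lower bound is comparatively soft, resting only on the already-quoted extension formula and the triviality that a nonzero ideal has positive Stanley depth.
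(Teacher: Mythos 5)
Your upper bound is essentially the paper's own argument run in the opposite direction: reduce to the squarefree case by Theorem 2.1, take $(x_1,\ldots,x_m)$ with Theorem 1.3 as the base, and apply Corollary 2.3 once for each variable attached to a generator (the paper organizes this as a downward induction on $n$, peeling one variable off a non-variable generator at each step). Two small repairs are needed in your write-up: variables that occur in no generator cannot be ``attached to the appropriate generator'', so those of your $n-m$ steps must instead invoke the extension formula $\sdepth(IS[x_{n+1}])=\sdepth(I)+1$ quoted in the introduction (still at most $+1$ per step, so the count is unharmed); and your opening normalization ``the remaining variables $x_{m+1},\ldots,x_n$ do not occur in any $v_i$'' is false in general, since the generators involve $q=\sum_i|\operatorname{supp}(v_i)|\geq m$ variables, with equality only when every $v_i$ is a pure power of a variable.

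That mislabeling is harmless for the upper bound, but it is fatal to your lower bound, which is where the genuine gap lies. Because the $y_i$ need not lie in $K[x_1,\ldots,x_m]$, the ideal $(y_1,\ldots,y_m)\subset S$ is not the extension of an ideal $J\subset K[x_1,\ldots,x_m]$, and the extension formula only strips off the $n-q$ genuinely free variables. Combined with ``a nonzero monomial ideal has Stanley depth at least $1$'', your argument yields only $\sdepth(I)\geq 1+n-q$, which is strictly weaker than $1+n-m$ as soon as some generator has support of size at least $2$; the remaining inequality $\sdepth\geq q-m+1$ for a squarefree complete intersection with full support in $q$ variables is exactly the nontrivial content of the lower bound, and it cannot be obtained from Corollary 2.3, which bounds the Stanley depth only from above. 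The paper closes this step simply by citing \cite[Proposition 3.4]{hvz}, which gives $\sdepth(I)\geq n-m+1$ for a monomial ideal with $m$ minimal generators; you need that result (or an independent proof of it) here, after which the conclusion $\sdepth(I)\geq\depth(I)=n-m+1$ goes through as you state.
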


\begin{proof}
The first inequality is a particular case of \cite[Proposition 3.4]{hvz}. Moreover, $\depth(I)=n-m+1$ and thus $\sdepth(I)\geq \depth(I)$.
In order to prove the second assertion, we consider a monomial complete intersection ideal $I\subset S$. By Theorem $2.1$, we can assume that $I$ is square free. Let $I=(v_1,\ldots,v_m)$, where
$v_1,\ldots,v_m$ is a regular sequence of squarefree monomials in $S$. We use induction on $n$. The case $n=1$ is obvious. If $I$ is generated by variables, then Theorem $1.3$ implies $\sdepth(I)= \left\lceil \frac{m}{2} \right\rceil + n - m$. Otherwise, we can assume that $x_n|v_m$ and $supp(v_m)\setminus \{x_n\}\neq \emptyset$. We denote $J=(v_1,\ldots,v_m/x_n)\cap K[x_1,\ldots,x_{n-1}]$. 
By induction, it follows that $\sdepth(J)\leq \left\lceil \frac{m}{2} \right\rceil + n - m -1$. Thus, Corollary $2.3$ implies $\sdepth(I)\leq \sdepth(J)+1 \leq \left\lceil \frac{m}{2} \right\rceil + n - m$, as required.
\end{proof}

\begin{conj}
If $I\subset S$ is a monomial complete intersection ideal, then $\sdepth(I)=\left\lceil \frac{m}{2} \right\rceil + n - m$.
\end{conj}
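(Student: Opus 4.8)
The plan is first to reduce to the squarefree case and then to attack a purely combinatorial partition problem. By Theorem~2.1 we have $\sdepth(I)=\sdepth(\sqrt I)$, and $\sqrt I$ is again a complete intersection with the same number $m$ of minimal generators, so we may assume $I=(v_1,\dots,v_m)$ with $v_1,\dots,v_m$ squarefree monomials having pairwise disjoint supports $W_1,\dots,W_m\subseteq\{x_1,\dots,x_n\}$. The inequality $\sdepth(I)\le\lceil m/2\rceil+n-m$ is Theorem~2.4, so all the content is in the reverse inequality: we must produce a Stanley decomposition of $I$ of Stanley depth $\lceil m/2\rceil+n-m$. Writing $W=W_1\cup\dots\cup W_m$ and taking $g$ to be the exponent vector of $v_1\cdots v_m$, the poset $P_I^g$ of \cite{hvz} is, up to the $n-|W|$ coordinates outside $W$ (which contribute to every $\rho(d_i)$), the up-set of the Boolean lattice $2^W$ generated by $W_1,\dots,W_m$; by \cite[Theorem~2.4]{hvz} the task becomes to partition this up-set into intervals $[c_i,d_i]$ with $\rho(d_i)\ge\lceil m/2\rceil+n-m$ for every $i$.

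I would proceed by induction on $n$, peeling off one variable. Choose $x_n$. If $x_n$ occurs in no $v_i$, then $I=I_0S$ for a complete intersection $I_0\subseteq K[x_1,\dots,x_{n-1}]$ with $m$ generators, and the extension formula of \cite{hvz} together with the inductive hypothesis finishes it. Otherwise relabel so that $x_n\in\operatorname{supp}(v_m)$ and split the monomials of $I$ according to divisibility by $x_n$: this gives $I=x_n(I:x_n)\oplus\bar I$, where $(I:x_n)=(v_1,\dots,v_{m-1},v_m/x_n)$ and $\bar I=(v_1,\dots,v_{m-1})$ regarded as an ideal of $K[x_1,\dots,x_{n-1}]$. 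Decomposing the two summands independently, $\sdepth(I)\ge\min\{\sdepth((I:x_n)),\,\sdepth_{K[x_1,\dots,x_{n-1}]}(\bar I)\}$. Now $(I:x_n)$ involves only $x_1,\dots,x_{n-1}$, so by the extension formula it equals $1$ plus the Stanley depth of a complete intersection with $m$ generators in $n-1$ variables, hence is $\ge\lceil m/2\rceil+n-m$ by induction; and $\bar I$ is a complete intersection with $m-1$ generators in $n-1$ variables, so $\sdepth(\bar I)\ge\lceil(m-1)/2\rceil+(n-1)-(m-1)=\lceil(m-1)/2\rceil+n-m$ by induction. When $m$ is even these two lower bounds agree and the induction closes; the base case $I=(x_1,\dots,x_m)$ (possibly with extra free variables) is Theorem~1.3, i.e. ultimately the Biro--Howard--Keller--Trotter--Young theorem.

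The hard part is $m$ odd. Then $\lceil(m-1)/2\rceil=\lceil m/2\rceil-1$, so the $\bar I$-summand above falls short by exactly one, and applying the (now even) case $m-1$ shows that $\sdepth(\bar I)$ equals $\lceil m/2\rceil+n-m-1$ on the nose — so \emph{no} choice of the peeled variable rescues this decomposition. An optimal Stanley decomposition of $I$ therefore cannot respect divisibility by a single variable; equivalently, the optimal interval partition of $P_I^g$ is forced to use intervals $[c_i,d_i]$ that straddle the slices $\{x_n\mid x^\sigma\}$ and $\{x_n\nmid x^\sigma\}$, and any such interval necessarily has $c_i$ divisible by one of $v_1,\dots,v_{m-1}$. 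The crux is to choose a whole family of such straddling intervals that covers, among other things, the set of monomials divisible by $v_m$ but by none of $v_1,\dots,v_{m-1}$ — a set which, as a poset, is a product of truncated Boolean lattices $2^{W_j}\setminus\{W_j\}$ and so has Stanley depth far too small to be partitioned acceptably on its own — while keeping every top $d_i$ of size $\ge\lceil m/2\rceil+n-m$. I would try to organise this either by a ``thickening'' reduction, enlarging one support $W_i$ by a single element and modifying a given optimal interval partition so that every $\rho(d_i)$ grows by exactly one, thereby reducing to the already settled all-singletons case $I=(x_1,\dots,x_m)$; or by pushing the combinatorics of \cite{par} (or Shen's upper-discrete partitions) through directly for up-sets generated by sets larger than singletons. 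That step is where essentially all the difficulty lies, and it is the reason the statement is posed only as a conjecture.
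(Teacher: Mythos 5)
The statement you are trying to prove is posed in the paper only as a conjecture: the paper itself proves just the two-sided bound $1+n-m\leq\sdepth(I)\leq\left\lceil \frac{m}{2}\right\rceil+n-m$ (Theorem 2.4), the reduction to the squarefree case (Theorem 2.1), and the variable-generated case (Theorem 1.3), and records the equality only for $n\leq 3$ by citing \cite{hvz}. Your proposal reproduces exactly these ingredients (the upper bound, the reduction to $\sqrt{I}$, the extension formula, the splitting $I=x_n(I:x_n)\oplus \bar I$) but does not close the remaining gap, and in fact the gap is larger than your write-up suggests. The decisive flaw is in the inductive structure: for $m$ even you bound $\sdepth(\bar I)$, where $\bar I$ is a complete intersection with $m-1$ generators in $n-1$ variables, by $\left\lceil\frac{m-1}{2}\right\rceil+n-m$, and since $m-1$ is odd this is precisely the conjectured lower bound in the odd case — the very case you concede is unproven. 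So the induction does not ``close'' for even $m$ either; once $m\geq 3$ and the generators are not all variables, no case is actually established, and the proposal proves nothing beyond what Theorems 1.3, 2.1 and 2.4 already give. (Your observation that for odd $m$ the summand $\bar I$ caps the splitting at $\left\lceil\frac{m}{2}\right\rceil+n-m-1$ is correct and shows this particular variable-peeling decomposition can never reach the conjectured value; but the proposed remedies — the ``thickening'' of a support by one element with all $\rho(d_i)$ increasing by one, or straddling intervals covering the monomials divisible by $v_m$ only — are left entirely as hopes, with no construction or proof.)

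Concretely, then: there is no proof here, and none in the paper; what is missing is a genuine lower-bound construction, i.e.\ an interval partition of $P_I^g$ (equivalently a Stanley decomposition of $I$) with every $\rho(d_i)\geq\left\lceil\frac{m}{2}\right\rceil+n-m$ for generators with non-singleton supports, and this cannot be obtained by splitting along a single variable as you do. For the record, this conjecture was subsequently settled affirmatively by Y.-H.~Shen (J. Algebra 321 (2009)), precisely by working at the level of the partition problem for $P_I^g$ (via so-called upper-discrete partitions) rather than by an induction that peels off one variable, which is consistent with your own diagnosis of where the difficulty lies.
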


Herzog, Vladoiu and Zheng show in \cite{hvz} that this conjecture is true for $n\leq 3$.

\begin{exm}
Let $I=(x_1,x_2)\subset K[x_1,x_2]$. With the notations of Corollary $2.3$, we have $I'=(x_1,x_2x_3)\subset K[x_1,x_2,x_3]$. We consider the following Stanley decomposition of $I'$:
\[ I' = x_2x_3 K[x_1,x_2,x_3]\oplus x_1K[x_1,x_3] \oplus x_1x_2 K[x_1,x_2].\]
Then $I=x_2K[x_1,x_2]\oplus x_1K[x_1]$ is a Stanley decomposition of $I$.
\end{exm}

We recall some definitions from \cite{hsy}. A Stanley space $uK[Z]$ is called squarefree, if $u\in S$ is a squarefree monomial and $supp(u)\subset Z$. A Stanley decomposition of $I$ is called squarefree if all its Stanley spaces are squarefree. We shall use the following notation: for $F\subset [n]$, we set $x_F=\prod_{i\in F}x_i$ and $Z_F=\{x_i|i\in F\}$.

\begin{obs}
\emph{Let $I\subset S$ be a squarefree monomial ideal. Then $I$ has squarefree Stanley decompositions. For instance, $\bigoplus_{F\subset [n],x_F\in I} x_F K[Z_F]$ is a squarefree Stanley decomposition for $I$. Moreover, if $\mathcal D: I = \bigoplus_{i=1}^r u_iK[Z_i]$ is a Stanley decomposition for $I$, then, by the proof of \linebreak \cite[Theorem 2.4]{hvz}, $\mathcal D': I = \bigoplus_{u_i\;squarefree} u_iK[Z_i\cup supp(u_i)]$ is a (squarefree) Stanley decomposition for $I$. Note that $\sdepth(\mathcal D')\geq \sdepth(\mathcal D)$ and therefore 
\[\sdepth(I) = \max\{\sdepth(\mathcal D)|\;D\;is\;a\;squarefree\;Stanley\;decomposition\;for\;I \}.  \]
An algorithm to compute a squarefree Stanley decomposition for an arbitrary squarefree monomial ideal $I\subset S$ was given by Imran Anwar in \cite[Lemma 2.2]{imran}.}
\end{obs}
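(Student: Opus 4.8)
The plan is to verify, in order, the three assertions contained in the remark: that the ``support decomposition'' $\bigoplus_{F\subseteq[n],\;x_F\in I}x_FK[Z_F]$ is a squarefree Stanley decomposition of $I$; that from an arbitrary Stanley decomposition $\mathcal D:\,I=\bigoplus_{i=1}^r u_iK[Z_i]$ one can manufacture a squarefree Stanley decomposition $\mathcal D':\,I=\bigoplus_{u_i\;squarefree}u_iK[Z_i\cup supp(u_i)]$ with $\sdepth(\mathcal D')\geq\sdepth(\mathcal D)$; and, as an immediate consequence, that $\sdepth(I)$ is the maximum of $\sdepth(\mathcal D)$ over all squarefree Stanley decompositions $\mathcal D$ of $I$.

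For the first assertion I would argue purely combinatorially: a monomial $m$ belongs to $x_FK[Z_F]$ exactly when $x_F\mid m$ and $m/x_F\in K[Z_F]$, and these two conditions together force $F=supp(m)$; hence, as $F$ runs over all subsets of $[n]$, the spaces $x_FK[Z_F]$ partition the monomials of $S$ according to their support, and each of them is visibly a squarefree Stanley space. It then remains to observe that, for a monomial $m$, one has $m\in I$ if and only if $x_{supp(m)}\in I$: the forward implication is the only step that is not formal, and it is precisely where squarefreeness of $I$ enters, since a generator $v_s\in G(I)$ dividing $m$ is squarefree, so $supp(v_s)\subseteq supp(m)$ and therefore $v_s\mid x_{supp(m)}$; the reverse implication is clear because $x_{supp(m)}\mid m$.

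For the second assertion, the easy part is bookkeeping: each $u_iK[Z_i\cup supp(u_i)]$ is contained in $u_iS\subseteq I$ because $u_i\in I$, it is a squarefree Stanley space when $u_i$ is squarefree, and $|Z_i\cup supp(u_i)|\geq|Z_i|$, which yields the inequality on Stanley depths. The real content is that the enlarged spaces still partition \emph{all} of $I$, not just its squarefree monomials. For this I would, given a monomial $m\in I$, pass to $\overline{m}:=x_{supp(m)}$, which again lies in $I$ by the first assertion; it belongs to exactly one space $u_kK[Z_k]$ of $\mathcal D$, and since $u_k\mid\overline{m}$ the monomial $u_k$ is squarefree with $supp(u_k)\subseteq supp(m)$, while $\overline{m}/u_k=x_{supp(m)\setminus supp(u_k)}\in K[Z_k]$ forces $supp(m)\setminus supp(u_k)\subseteq Z_k$. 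Then $u_k\mid m$ and $supp(m/u_k)\subseteq supp(m)\subseteq Z_k\cup supp(u_k)$, so $m\in u_kK[Z_k\cup supp(u_k)]$; this gives covering. For disjointness, if $m$ were in $u_iK[Z_i\cup supp(u_i)]\cap u_jK[Z_j\cup supp(u_j)]$ with $u_i,u_j$ squarefree, then $supp(m)\setminus supp(u_i)\subseteq supp(m/u_i)\subseteq Z_i\cup supp(u_i)$, and since $supp(m)\setminus supp(u_i)$ avoids $supp(u_i)$ this forces $supp(m)\setminus supp(u_i)\subseteq Z_i$; hence $\overline{m}=u_i\cdot x_{supp(m)\setminus supp(u_i)}\in u_iK[Z_i]$, and symmetrically $\overline{m}\in u_jK[Z_j]$, so $i=j$ because $\mathcal D$ is direct.

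Granting this, the third assertion is immediate: applying the construction to a decomposition $\mathcal D$ with $\sdepth(\mathcal D)=\sdepth(I)$ produces a squarefree $\mathcal D'$ with $\sdepth(\mathcal D')\geq\sdepth(I)$, hence with $\sdepth(\mathcal D')=\sdepth(I)$, while trivially $\sdepth(\mathcal D')\leq\sdepth(I)$ for every squarefree $\mathcal D'$. The point I expect to require the most care is the covering claim in the second assertion: after discarding the non-squarefree spaces of $\mathcal D$ one apparently loses exactly the monomials those spaces contained, and what must be checked is that enlarging each surviving $Z_i$ so as to contain $supp(u_i)$ recaptures precisely those monomials — the device of testing membership through $\overline{m}$, together with squarefreeness of $I$, is what makes this transparent. (Alternatively, the second assertion can be extracted from the poset correspondence in the proof of \cite[Theorem 2.4]{hvz} upon taking $g=(1,\ldots,1)$, so that $P_I^g$ sits inside the Boolean lattice.)
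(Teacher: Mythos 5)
Your argument is correct, and it is genuinely more self-contained than what the paper does: the paper offers no proof at all for the key second assertion, simply deferring it to ``the proof of [Theorem 2.4]'' of Herzog--Vladoiu--Zheng, where one takes $g=(1,\ldots,1)$ so that the characteristic poset $P_I^g$ is the set of supports of squarefree monomials of $I$ inside the Boolean lattice, and the passage $\mathcal D\mapsto\mathcal D'$ (discard the spaces with non-squarefree $u_i$, enlarge $Z_i$ to $Z_i\cup supp(u_i)$) is exactly what that proof performs when it converts a Stanley decomposition into an interval partition and back. You instead verify everything directly: the support decomposition $\bigoplus_{x_F\in I}x_FK[Z_F]$ via the observation that $m\in x_FK[Z_F]$ forces $F=supp(m)$ together with the equivalence $m\in I\Leftrightarrow x_{supp(m)}\in I$ (the only place squarefreeness of $I$ is used), and then both covering and disjointness for $\mathcal D'$ by testing membership through $\overline m=x_{supp(m)}$, which reduces each question to the directness of the original $\mathcal D$ on the squarefree monomial $\overline m$; the inequality $\sdepth(\mathcal D')\geq\sdepth(\mathcal D)$ and the displayed formula for $\sdepth(I)$ then follow exactly as you say (it is worth noting, as a trivial aside, that the set of squarefree $u_i$ is nonempty since any squarefree generator of $I$ lies in some $u_iK[Z_i]$, so $\mathcal D'$ is a genuine decomposition). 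What your route buys is a transparent, citation-free proof readable without the poset machinery; what the paper's route buys is brevity and consistency with the partition formalism of \cite{hvz} that it already uses in Lemma 1.1, and your closing parenthetical correctly identifies that this is the same mechanism in disguise.
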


\begin{obs}
\emph{Let $I\subset S$ be a squarefree monomial ideal with $G(I)=\{v_1,\ldots,v_m\}$. Let $I'=(v_1,\ldots,v_{m-1},v_my)\subset S[y]$.
Suppose $I=\bigoplus_{i=1}^r u_i K[Z_i]$ is a squarefree Stanley decomposition of $I$. One can easily see that
$IS[y]=\bigoplus_{i=1}^r u_i K[Z_i\cup \{y\}]$. Since $I'\subset IS[y]$, it follows that
\[ I' = \bigoplus_{i=1}^r (u_i K[Z_i\cup \{y\}]\cap I'). \]
We can refine this direct sum to a squarefree Stanley decomposition $\mathcal D'$ of $I'$. As the following example shows, the decomposition of $I'$ obtained may have $\sdepth(\mathcal D')<\sdepth(I')$.}
\end{obs}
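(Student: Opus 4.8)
The plan is to establish the three assertions packed into the remark one at a time, each by an elementary monomial count, and then to isolate the single place where care is genuinely needed.

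First I would verify $IS[y]=\bigoplus_{i=1}^r u_iK[Z_i\cup\{y\}]$ directly on monomials. Any monomial of $S[y]$ has the form $w_0y^b$ with $w_0\in S$ a monomial, and, since the $v_k$ involve no $y$, it lies in $IS[y]$ iff $w_0\in I$; then the given squarefree decomposition of $I$ writes $w_0=u_ig$ with $g$ a monomial of $K[Z_i]$, so $w_0y^b=u_i\,(gy^b)$ with $gy^b$ a monomial of $K[Z_i\cup\{y\}]$. Thus the spaces $u_iK[Z_i\cup\{y\}]$ span $IS[y]$; and if $u_i(gy^b)=u_j(g'y^{b'})$ then, reading off $y$-degrees, $b=b'$ and $u_ig=u_jg'$, whence $i=j$ by directness of the decomposition of $I$. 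Next, $I'\subset IS[y]$ because $v_k\in I$ for $k<m$ while $v_m\in I$ forces $v_my\in IS[y]$; and since $I'$ is a monomial ideal contained in the direct sum $\bigoplus_i u_iK[Z_i\cup\{y\}]$, every monomial of $I'$ lies in exactly one summand, so $I'=\bigoplus_{i=1}^r\bigl(u_iK[Z_i\cup\{y\}]\cap I'\bigr)$.

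The substantive step is to refine each intersection $u_iK[Z_i\cup\{y\}]\cap I'$ into squarefree Stanley spaces. Here I would use that every generator of $I'$ is squarefree ($y$ being a new variable): a monomial $u_if$ with $supp(f)\subseteq Z_i\cup\{y\}$ lies in $I'$ iff $supp(v)\setminus supp(u_i)\subseteq supp(f)$ for some generator $v$ of $I'$. Hence $u_iK[Z_i\cup\{y\}]\cap I'=u_i\cdot\mathfrak b_i$, where $\mathfrak b_i\subset K[Z_i\cup\{y\}]$ is the squarefree monomial ideal generated by the squarefree monomials $x_{supp(v)\setminus supp(u_i)}$, taken over those generators $v$ of $I'$ with $supp(v)\setminus supp(u_i)\subseteq Z_i\cup\{y\}$. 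Since all of these generators avoid $supp(u_i)$, the ideal $\mathfrak b_i$ is extended from a squarefree monomial ideal in the smaller ring $K[(Z_i\cup\{y\})\setminus supp(u_i)]$, which has a squarefree Stanley decomposition by Remark $2.7$ (or by the algorithm referenced there); multiplying that decomposition by $u_i$ and adjoining the variables of $supp(u_i)$ to every index set produces squarefree Stanley spaces $u_iw_{ij}K[Z_{ij}]$ decomposing the $i$-th intersection. Collecting these over all $i$ gives the squarefree Stanley decomposition $\mathcal D'$ of $I'$ promised in the remark.

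I expect the only delicate point to be exactly this last piece of bookkeeping: while one works inside the residual ideal $\mathfrak b_i$ the variables of $supp(u_i)$ are invisible, so they must be reinserted into the index sets of the refined spaces, and one must then check that this reinsertion leaves the sum direct — which it does, because $supp(u_i)$ is disjoint from the supports of the residual generators. The closing sentence of the remark is not part of the proof: it merely records that the $\mathcal D'$ built in this way need not be optimal, as the example that follows demonstrates.
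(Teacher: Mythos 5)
Your argument is correct, and it follows the route the paper itself intends: the paper states these facts without proof (illustrating the refinement only through Example 2.9), and your verification of $IS[y]=\bigoplus_i u_iK[Z_i\cup\{y\}]$, of the induced direct sum over $I'$, and of the refinement of each $u_iK[Z_i\cup\{y\}]\cap I'$ as $u_i$ times an extended squarefree ideal in the residual variables (decomposed via Remark 2.7) is exactly the bookkeeping the paper leaves implicit. No gaps; the observation that the last sentence of the remark is established by Example 2.9 rather than by the proof is also consistent with the paper.
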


\begin{exm}
Let $I=(x_1,x_2,x_3)\subset K[x_1,x_2,x_3]$ and $I'=(x_1,x_2,x_3x_4)\subset K[x_1,x_2,x_3,x_4]$. We consider the following (squarefree) Stanley decomposition of $I$:
\[ I = x_1x_2x_3K[x_1,x_2,x_3]\oplus x_1K[x_1,x_3]\oplus x_2K[x_1,x_2]\oplus x_3K[x_2,x_3]. \]
Since $x_1 \in I'$ it follows that $x_1x_2x_3K[x_1,x_2,x_3,x_4]\cap I' = x_1x_2x_3K[x_1,x_2,x_3,x_4]$. Similarly, $x_2K[x_1,x_2,x_4]\cap I' =x_2K[x_1,x_2,x_4]$ and $x_1K[x_1,x_3,x_4]\cap I' =x_1K[x_1,x_3,x_4]$. Also, $x_3K[x_2,x_3,x_4]\cap I' = x_3x_4K[x_2,x_3,x_4]\oplus x_2x_3K[x_2,x_3]$.
Thus, $I' = x_1x_2x_3K[x_1,x_2,x_3,x_4]\oplus x_2K[x_1,x_2,x_4] \oplus x_1K[x_1,x_3,x_4] \oplus x_3x_4K[x_2,x_3,x_4]\oplus x_2x_3K[x_2,x_3]$. On the other hand, by \cite[Proposition 3.8]{hvz}, we have $\sdepth(I')=3$.
\end{exm}


\vspace{2mm} \noindent {\footnotesize
\begin{minipage}[b]{15cm}
 Mircea Cimpoeas, Institute of Mathematics of the Romanian Academy, Bucharest, Romania\\
 E-mail: mircea.cimpoeas@imar.ro
 
 \end{minipage}}

\end{document}